\renewcommand{\thefootnote}%
{\fnsymbol{footnote}}
\theoremstyle{definition}
\theoremstyle{plain}
\newtheorem{Thm}
{Theorem}
\newtheorem{Lemm}[Thm]{Lemma}
\newtheorem{Cor}[Thm]{Corollary}
\newtheorem{Propn}[Thm]{Proposition}
\newtheorem*{Thm*}{Theorem}
\newtheorem*{Lemm*}{Lemma}
\newtheorem*{Propn*}{Proposition}
\newtheorem*{Cor*}{Corollary}
\theoremstyle{remark}
\newtheorem*{Rmk}{\bf Remark}
\newcommand{\abs}[1]{\lvert#1\rvert}
\DeclareMathSymbol
{\rightrightarrows}
{\mathrel}{AMSa}{"13}
\title{Periodic Points on the $2$-sphere}
\author{Charles Pugh and Michael Shub}
\thanks{Michael Shub was supported by CONICET PIP 0801 2010-2012  and ANPCyT PICT 2010-00681.}
\address{Charles Pugh \\ Department of Mathematics \\University of Chicago\\ 5734 S. University Ave.\\
Chicago, Illinois 60637\\ USA \\ pugh@math.uchicago.edu {\em and  }
Department of Mathematics\\ University of California, Berkeley\\ 970 Evans Hall $\#$3840\\Berkeley, CA 94720 USA pugh@math.berkeley.edu}
\address{Michael Shub \\ CONICET, IMAS, Universidad de Buenos Aires, Buenos Aires, Argentina \\\emph{and} Department of Mathematics \\ The CUNY Graduate Center\\ 365 Fifth Avenue, Room 4208 \\ New York, NY 10016 \\  shub.michael@gmail.com}
\dedicatory{To Arieh Iserles on his 65$^{th}$ birthday}
\begin{document}
\maketitle

\begin{abstract}
For a $C^{1}$ degree two latitude preserving endomorphism $f$ of the $2$-sphere, we show that $f$ has $2^{n}$ periodic points. 
\end{abstract}

\section{Introduction}
The relationship between the long term dynamics of an endomorphism of a manifold and its long term effect on the algebraic topology of the manifold can depend on the smoothness of the endomorphism.  See \cite{Spain} for a discussion of this.  Here we deal with a particular case of  Problem~3 of that paper.  

Let  $S$ be the $2$-sphere, oriented in the standard fashion.  Fix a   continuous map $f : S \rightarrow S$ of global degree $2$.  That is, the map $f_{_{\displaystyle *}} : H_{2}(S) \rightarrow H_{2}(S)$ is multiplication by $2$.  Problem~3 asks:  Is the Growth Rate Inequality 
$$
 \limsup_{n\rightarrow \infty} \frac{1}{n}\ln N_{n}(f)  \geq   \ln(2)
$$
true?  Here $N_{n}(f)$ is the number distinct periodic points of $f$ having period $n$, i.e., the number of fixed points of $f^{n}$.  

If $f$ is merely continuous   the answer is ``not necessarily.''  For as observed in \cite{SS}, a Lipschitz counterexample  is $z \mapsto 2z^{2}/\abs{z}$ 
 where $S$ is the Riemann sphere.  In polar coordinates $f$ sends $(r, \theta )$ to $(2r, 2\theta )$.  The only periodic points are  the North and South poles. 
 
 On the other hand, if $f$ is a rational map, then the answer is ``yes.''  See Proposition~1 of \cite{Asterisque}.  So the question becomes: Does there exist an $r \geq  1$ such that if $f$ is $C^{r}$ then the Growth Rate Inequality holds?    Perhaps $r=2$ will suffice, or even $r=1$.
 
 From \cite{SS} it follows that if $f$ is $C^{1}$ then it must have infinitely many distinct periodic points, but their growth rate remains unknown.  
  As shown in \cite{MP}, the topological entropy of $f$ is better understood:  If $f$ is $C^{1}$ then it is at least $\ln 2$.  This implies there are invariant probability measures with measure theoretic entropy $\ln 2 - \epsilon $.  Thus, if $f$ is $C^{1+ \alpha }$ then the sum of the Lyapunov exponents is at least $\ln 2 - \epsilon $.  
 
 We expect a version of Katok's Theorem \cite{Katok} about   diffeomorphisms to be true for endomorphisms in the case that  all the Lyapunov exponents are different from zero.  In fact this is already proved in the case that all the Lyapunov exponents are positive.  See \cite{GW}.  The remaining case will be where one of the Lyapunov exponents is zero.
 
 At the end of Section~\ref{s.examples} we give three examples of smooth endomorphisms of the $2$-sphere, two  with topological entropy $\ln  2$, the other with topological entropy $\ln 3$.  All have one Lyapunov exponent zero and are essentially $2:1$.   The first is of degree zero and  has only one periodic point.  The second and third have degree two, are unlike the map $z \mapsto z^{2}$, but satisfy the Growth  Rate Inequality.  All three examples preserve the latitude foliation.  
 
 We hope this elementary result might give a clue as to how   homology assumptions can intervene when there are zero Lyapunov exponents and also   families of invariant  center manifolds replacing the circles of our foliation, but that will surely go way beyond what we can accomplish here.

\section{Invariant latitudes}

 We say that $f : S \rightarrow  S$   preserves the latitude foliation if it carries each latitude into another  latitude or  to one of the poles.  It need not be a homeomorphism from one latitude to another.  We assume throughout that $f : S \rightarrow  S$ is a continuous, latitude preserving endomorphism of degree $2$.  
 
 \begin{Thm}
 \label{t.fixedpoints}
If $f$ is $C^{1}$  then $f^{n}$ has at least $2^{n}  $ fixed points. 
  \end{Thm}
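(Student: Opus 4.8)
The plan is to push the problem down to one-dimensional dynamics on the space of latitudes, while keeping track of the degrees of the circle maps $f$ induces on individual latitudes. Write $S$ minus its two poles as $S^{1}\times(0,1)$, with the poles at the ends $0$ and $1$. Since $f$ preserves latitudes it induces a continuous $\bar f\colon[0,1]\to[0,1]$, and on a latitude $L_{t}$ with $\bar f(t)\in(0,1)$ it restricts to a circle map $g_{t}\colon L_{t}\to L_{\bar f(t)}$ whose degree I call $d$. First I extract what $\deg f=2$ forces. Counting, with signs, the preimages of a regular value $q$ of $f$ that lies on a regular latitude $s$ of $\bar f$ and is not in the $f$-image of the poles, the matrix of $Df$ at each such preimage is triangular with diagonal entries ``$\partial g_{t}$'' and $\bar f'(t)$, so $\deg f=d\cdot\nu$, where $\nu$ is the signed number of $t$'s with $\bar f(t)=s$. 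Since $\nu\in\{-1,0,1\}$ and is nonzero, $\bar f$ must carry the two poles to the two poles (in some order), $\nu=\pm1$, and $d=\pm2$; in particular $f$ permutes the poles.

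The only place the hypothesis enters --- and the crux --- is the behaviour of $f$ at a pole $p$, whose image is again a pole. In a $C^{1}$ chart centred at $p$, latitude preservation says $\abs{f(z)}$ depends only on $\abs{z}$; comparing this with the linear approximation $f(z)=Df(p)z+o(z)$ forces $\abs{Df(p)v}$ to be the same for all unit vectors $v$, so $Df(p)$ is a nonnegative scalar $\lambda$ times an orthogonal map. But the local mapping degree of $f$ at $p$ equals $\pm d$, hence is $\pm2$ and in particular $\neq\pm1$, so $Df(p)$ cannot be invertible; thus $\lambda=0$, $Df(p)=0$, and in particular $\bar f'=0$ at each pole. (This is exactly the property the Lipschitz example $z\mapsto 2z^{2}/\abs{z}$ lacks, since there $\bar f'(0)=2$.)

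Now fix $n$. Since the forward $\bar f$-orbit of a pole stays among the poles and $\bar f'=0$ there, the chain rule gives $(\bar f^{n})'=0$ at each pole. Hence, if $\bar f^{n}$ fixes both poles then $\bar f^{n}(t)<t$ for $t$ just above $0$ and $\bar f^{n}(t)>t$ for $t$ just below $1$, while if $\bar f^{n}$ interchanges the poles then $\bar f^{n}(t)-t$ is near $+1$ at $0$ and near $-1$ at $1$; either way the intermediate value theorem produces $t_{*}\in(0,1)$ with $\bar f^{n}(t_{*})=t_{*}$. As $t_{*}$ lies outside the (finite) pole-orbit, its whole $\bar f$-orbit lies in $(0,1)$, so $f^{n}$ maps the circle $L_{t_{*}}$ to itself as a circle map of degree $d^{n}=(\pm2)^{n}$, and a degree-$k$ self-map of the circle has at least $\abs{k-1}$ fixed points (its Lefschetz number is $1-k$ and in dimension one every fixed-point index is $0$ or $\pm1$). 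This gives at least $2^{n}-1$ fixed points of $f^{n}$ on $L_{t_{*}}$, and adjoining the two poles in the cases where $f^{n}$ fixes them (in the remaining case, $d=-2$ with $n$ odd, the count on $L_{t_{*}}$ is already $\abs{(-2)^{n}-1}=2^{n}+1$) yields at least $2^{n}$ fixed points of $f^{n}$, as required.

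The main obstacle is the $C^{1}$ step, i.e.\ correctly squeezing $Df(\mathrm{pole})=0$ out of latitude preservation together with the non-invertibility coming from degree $2$. A secondary technical point is that $\bar f$ could in principle touch a pole at an interior latitude, in which case the fiber degree $d$ need not be globally constant and the identity $\deg f=d\cdot\nu$ must be read componentwise; I would handle this either by a more careful accounting of those degenerate latitudes or, more robustly, by applying the Lefschetz fixed-point formula on all of $S$ --- where $L(f^{n})=1+2^{n}$ --- and checking, using the latitude-fibered form of $f$ near each fixed point together with $Df(\mathrm{pole})=0$, that every fixed-point index is $0$ or $\pm1$.
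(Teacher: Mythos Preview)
Your argument is correct in the special case where the fiber degree is globally $\pm 2$ --- equivalently, where $\bar f$ never touches a pole in the interior, so there is a single directed band --- and your $C^{1}$ step at the poles in that case is exactly the one the paper uses (Lemma~\ref{l.invariantlatitude}, Cases~1 and~3). The gap is that you relegate the multi-band situation to a ``secondary technical point,'' when in fact it is the main difficulty. The paper's second and third examples in Section~\ref{s.examples} (codes $(0;1,-1)$ and $(0;1,0,1)$) are smooth degree-$2$ latitude-preserving maps in which \emph{every} latitude degree is $0$ or $\pm 1$; for such $f$ there is no invariant latitude on which $f^{n}$ has degree $\pm 2^{n}$, so your mechanism for producing $2^{n}$ fixed points does not apply at all. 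Your identity $\deg f=d\cdot\nu$ is only valid band by band; globally it becomes $\deg f=\sum_{B}\Delta_{I}\varphi\cdot d(B)$ (Proposition~\ref{p.globaldegree}), which is perfectly compatible with $\deg f=2$ and all $\abs{d(B)}\leq 1$. Note too that in the second example $\bar f$ sends both endpoints to $0$, so your dichotomy ``$\bar f^{n}$ fixes both poles or interchanges them'' is already false, and since the band at the Southpole there has latitude degree $1$, the conclusion $Df(\mathrm{pole})=0$ does not follow either.

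Your Lefschetz fallback inherits these problems and adds new ones. The input $Df(\mathrm{pole})=0$ was deduced from a local degree $\pm 2$ at the pole, which is no longer available in the multi-band case. More seriously, for an arbitrary $C^{1}$ map the fixed set of $f^{n}$ need not consist of isolated points, so one cannot pass from $L(f^{n})=1+2^{n}$ and a bound on pointwise indices to a lower bound on the \emph{number} of fixed points without further work. The paper handles the case $\abs{d(B)}\leq 1$ for all directed bands by a direct combinatorial count (Lemma~\ref{l.numberoffixedpoints}): it classifies the ``legs'' of the graph of the latitude map by whether the corresponding band is ascending or descending and has fiber degree $+1$, $-1$, or $0$, records which legs the diagonal must cross, and extracts from each crossing of a reverse- or zero-leg the guaranteed fixed points on the associated invariant latitude; together with the fixed poles this exceeds $d(f)$. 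Since by Lemma~\ref{l.degreeproduct} the bands of $f^{n}$ still satisfy $\abs{d(B)}\leq 1$, applying this to $f^{n}$ gives more than $d(f^{n})=2^{n}$ fixed points. That counting lemma is the missing ingredient in your proposal.
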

  
  \begin{Cor}
\label{c.GRF}
If  $f$ is $C^{1}$ and 
  $N_{n}$ is the number of fixed points of $f^{n}$ then the Growth Rate Inequality
  $$
  \limsup_{n \rightarrow \infty} \frac{1}{n} \ln (N_{n}) \geq \ln(2).
  $$
  holds.
\end{Cor}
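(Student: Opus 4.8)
The plan is to obtain the Growth Rate Inequality as a formal consequence of Theorem~\ref{t.fixedpoints}. By that theorem, since $f$ is $C^{1}$, the iterate $f^{n}$ has at least $2^{n}$ fixed points, so $N_{n} \geq 2^{n}$ for every $n \geq 1$. Applying the increasing function $\ln$ and then dividing by the positive integer $n$ preserves the inequality, giving $\frac1n \ln N_{n} \geq \frac1n \ln(2^{n}) = \ln 2$. Since this holds for every $n$, the limit superior of the sequence $\bigl(\frac1n \ln N_{n}\bigr)$ is bounded below by $\ln 2$, which is exactly the assertion.

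There is essentially nothing to overcome in this step; the only points worth a word are degenerate ones. If some $N_{n}$ were infinite the inequality is trivial, so we may assume each $N_{n}$ is a finite positive integer and the logarithm makes sense; and if $N_{n}$ were to grow faster than exponentially the limit superior would be $+\infty$, which is still $\geq \ln 2$. In fact the same argument gives the stronger statement $\liminf_{n\to\infty}\frac1n\ln N_{n} \geq \ln 2$, since the bound $\frac1n\ln N_{n} \geq \ln 2$ is uniform in $n$; we state only the $\limsup$ version to match the formulation of Problem~3.

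The genuine difficulty is packaged entirely into Theorem~\ref{t.fixedpoints}, which the corollary simply quotes. Were one to prove that theorem directly, the natural route is to exploit the latitude foliation: an invariant latitude circle $C$ on which $f$ acts, together with the poles and the way $f$ permutes the base of the foliation, forces a degree-two circle map to appear in the fibre direction of the iterate $f^{n}$, and a degree-$d$ self-map of the circle has at least $|d-1|$ fixed points; tracking how these multiply over $n$ iterations is what should yield the count $2^{n}$. The delicate hypotheses are that $f$ is $C^{1}$ rather than merely Lipschitz — recall that $z \mapsto 2z^{2}/|z|$ has only two periodic points — and that $f$ need not be a homeomorphism on each latitude, so one must count fixed points even on circles where $f$ folds or collapses; handling those cases carefully is where I would expect the real work to concentrate. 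None of that, however, is needed here: given Theorem~\ref{t.fixedpoints}, the corollary is immediate.
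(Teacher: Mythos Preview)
Your proof is correct and matches the paper's own argument exactly: the corollary is immediate from Theorem~\ref{t.fixedpoints}, since $N_{n}\geq 2^{n}$ gives $\frac{1}{n}\ln N_{n}\geq \ln 2$ for all $n$. The additional commentary you provide on degenerate cases and on the strategy behind Theorem~\ref{t.fixedpoints} is reasonable context but not needed for the deduction.
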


\begin{proof}[\bf Proof]
This is immediate from the theorem, but see the remark at the end of Section~\ref{s.counting} for a shorter proof of the corollary.
\end{proof}

\begin{Rmk}  
$f^{n}$ is the $n^{\textrm{th}}$ iterate of $f$ and the fixed points are geometrically distinct.  The $C^{1}$ assumption is used rarely in the proof, but without it the theorem fails:  As  noted above, the Lipschitz endomorphism $z \mapsto 2z^{2}/\abs{z}$ from \cite{SS} has degree  $2$ but only two periodic points.   
\end{Rmk}

\begin{Lemm}
\label{l.poletopole} A latitude preserving endomorphism  sends a pole to a pole, not a latitude.
\end{Lemm}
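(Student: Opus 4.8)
The plan is a proof by contradiction that uses only the continuity of $f$ and the fact that latitudes close to a pole are small; neither the degree hypothesis nor smoothness seems to enter.

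Suppose $f$ carried the north pole $N$ to a point $q$ lying on an honest latitude $\ell$, so that $q$ is not a pole and $\ell$ is a non-degenerate circle. The first step I would take is a continuity estimate at $N$: given $\varepsilon>0$, there is a spherical cap about $N$ whose $f$-image lies in the $\varepsilon$-ball $B_\varepsilon(q)$ about $q$, and since the latitudes near $N$ shrink down to $N$, every latitude sufficiently close to $N$ lies inside this cap. Hence $f(L)\subseteq B_\varepsilon(q)$ for every latitude $L$ sufficiently close to $N$; in particular such an $f(L)$ has diameter less than $2\varepsilon$ and lies near the latitude $\ell$.

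The second step invokes latitude preservation: $f(L)$ is either a pole or a full latitude. It cannot be a pole, because $B_\varepsilon(q)$ contains no pole once $\varepsilon$ is small. So $f(L)$ is a latitude lying inside $B_\varepsilon(q)$, hence a latitude close to $\ell$; but the latitudes close to the fixed non-degenerate circle $\ell$ all have diameter bounded below by a positive constant, contradicting $\operatorname{diam} f(L)<2\varepsilon$ once $\varepsilon$ is chosen small enough. (One can phrase the contradiction topologically instead: a latitude separates $N$ from $P$ on $S$, while $S\setminus B_\varepsilon(q)$ is connected and contains both poles, so no latitude can lie inside $B_\varepsilon(q)$.) This shows $f(N)$ is a pole; the same argument at $P$, or the reflection of $S$ interchanging the two poles, shows $f(P)$ is a pole.

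The one genuinely delicate point — and really the whole substance of the lemma — is the use of ``latitude preserving'' in the second step: it is essential that $f(L)$, when not a pole, be a \emph{complete} latitude circle, since that is what forces it to be ``large'' when it sits near $\ell$; the contradiction is precisely the conflict between this largeness and $f(L)\to\{q\}$. Past that, nothing is harder than making the two steps above precise, so I would not expect a serious obstacle.
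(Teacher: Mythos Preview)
Your argument is correct and is precisely the continuity argument the paper has in mind; the paper's own proof is the single line ``Obvious from continuity of the endomorphism,'' and you have simply unpacked it. Your explicit flag that the argument hinges on $f(L)$ being a \emph{full} latitude (when not a pole) is exactly the point that makes the one-line proof work.
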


\begin{proof}[\bf Proof]
Obvious from  continuity of the endomorphism.
\end{proof}

Parametrize the latitudes by their height $h$, $0 \leq  h \leq 1$.  (The sphere $S$ rests on the $xy$-plane and has center $(0,0,1/2)$.)  The Southpole corresponds to $h=0$ and the Northpole to $h=1$.  If $L(h)$ denotes the latitude of height $h$ then we define the \textbf{latitude map} $\varphi  : [0,1] \rightarrow [0,1]$ by 
$$
\varphi (h) = \textrm{ height of } f(L(h)).
$$
It is continuous and Lemma~\ref{l.poletopole} implies that $\varphi \{0,1\} \subset \{0,1\}$.  The map $f$ fibers over $\varphi $.  
 
Orient each latitude circle in $S$ in a counterclockwise fashion as viewed from above the Northpole of $S$.  If $0 < h, \varphi (h) < 1$ then the \textbf{latitude degree} $d(h)$ is the degree of the map $f : L(h) \rightarrow L(\varphi (h))$.  Since these latitude circles are oriented, $d(h)$ is well defined and locally constant as a function of $h \in ((0,1) \cap \varphi ^{-1}(0,1))$.

Corresponding to the maximal open intervals $I$ on which $d(h)$ is well defined and constant are open bands $B = B(I) \subset  S$,
$$
B = \bigcup_{h \in I} L(h).
$$
We denote by $d(B)$ the common latitude degree of $f$ on latitudes in $B$.  
Lemma~\ref{l.poletopole} implies that  the value of $\varphi $ at the endpoints of $I$ is $0$ or $1$.

\begin{Propn}
\label{p.globaldegree}
The global degree of $f$ is
$$
d(f) = \sum  \Delta _{I}\varphi  \cdot d(B)
$$
where $\Delta _{I}\varphi  = \varphi (b) - \varphi (a)$ and the sum is taken over   the bands and their band intervals  $I = (a,b)$. 
\end{Propn}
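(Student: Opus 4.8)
The plan is to compute the global degree of $f$ by localizing it over a non-polar point, observing that only the bands contribute, and showing that a band $B=B(I)$ contributes exactly $\Delta_I\varphi\cdot d(B)$ via a suspension-type degree computation.

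\emph{Reduction to the bands.} First I would record that the band intervals are precisely the connected components of the open set $\Omega=(0,1)\cap\varphi^{-1}((0,1))$, so on the complementary closed set $[0,1]\setminus\Omega$ one has $\varphi\in\{0,1\}$; equivalently, $f$ carries every non-band latitude, and both poles, into the two-element pole set $\{L(0),L(1)\}$. Fix $h_0\in(0,1)$ and $p\in L(h_0)$, and choose a small open disk $U\ni p$ with $\overline U\subset\bigcup_{|h-h_0|<\epsilon}L(h)$ and $\epsilon<\min(h_0,1-h_0)$. Since $f$ fibers over $\varphi$, $f^{-1}(\overline U)\subset\bigcup\{L(h):\varphi(h)\in(h_0-\epsilon,h_0+\epsilon)\}\subset\bigcup_{h\in\Omega}L(h)=\bigsqcup_B B$, the disjoint union of the open bands, and $f^{-1}(\overline U)$ is disjoint from every boundary latitude $\partial\overline B$, since $\partial\overline B$ maps into $\{L(0),L(1)\}$ while $\overline U$ avoids the poles. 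In particular each $K_B:=f^{-1}(\overline U)\cap\overline B$ is a compact subset of the open band $B$. By the excision and additivity properties of the mapping degree, $d(f)=\deg(f,S,p)=\sum_B\deg(f,B,p)$, the sum of the local degrees of $f$ over $p$ relative to the open bands $B$.

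\emph{Reduction of one band to a self-map of $S^2$.} Fix a band $B=B(I)$ with $I=(a,b)$. Then $\overline B=\bigcup_{h\in[a,b]}L(h)$ is a closed cylinder (or, in the degenerate cases where $I$ abuts a pole, a closed disk or all of $S$), $f(\partial\overline B)\subset\{L(0),L(1)\}$ because $\varphi(a),\varphi(b)\in\{0,1\}$, and consequently $\Delta_I\varphi=\varphi(b)-\varphi(a)\in\{-1,0,1\}$. Collapsing $L(a)$ and $L(b)$ each to a point turns $\overline B$ into a $2$-sphere, and since $f|_{\overline B}$ is constant on each of $L(a)$ and $L(b)$ it descends to a continuous map $\hat f_B:S^2\to S$. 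Because the collapsing map is an orientation-preserving homeomorphism near $K_B$ (which avoids $\partial\overline B$) and $p$ is not a pole, we get $\deg(f,B,p)=\deg(\hat f_B,S^2,p)=\deg(\hat f_B)$, the ordinary degree of the self-map $\hat f_B$ of the $2$-sphere.

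\emph{Computing $\deg(\hat f_B)$.} Write a point of $\overline B$ as $(h,\theta)$ with $h\in[a,b]$ and $\theta$ in the oriented latitude circle; then $f(h,\theta)$ has height $\varphi(h)$, and for $h\in I$ the induced circle map $L(h)\to L(\varphi(h))$, $\theta\mapsto f(h,\theta)$, has degree $d(h)=d(B)$. Homotoping $\varphi|_{[a,b]}$ rel $\{a,b\}$ to its affine interpolation gives a homotopy of $f|_{\overline B}$ keeping $\partial\overline B$ mapped into $\{L(0),L(1)\}$, hence a homotopy of $\hat f_B$, so the degree is unchanged; after it, if $\varphi(a)=\varphi(b)$ the map $\hat f_B$ lands in a single pole and $\deg(\hat f_B)=0=\Delta_I\varphi\cdot d(B)$, while if $\varphi(a)\neq\varphi(b)$ the height map is strictly monotone, a non-polar $p$ has a unique preimage height $h^{*}\in(a,b)$, and $\hat f_B^{-1}(p)$ is the fiber over $p$ of the degree-$d(B)$ circle map $\theta\mapsto f(h^{*},\theta)$; counting preimages with their local signs gives $\deg(\hat f_B)=\Delta_I\varphi\cdot d(B)$, where in this case $\Delta_I\varphi=\pm1$ records whether the monotone height map is increasing or decreasing. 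Equivalently, this last fact is the standard homological statement that a self-map of $S^2=\Sigma S^1$ respecting the suspension has degree equal to the product of the degree of the base map and the degree of the circle map. Summing over all bands yields $d(f)=\sum_B\Delta_I\varphi\cdot d(B)$.

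\emph{Main obstacle.} The only delicate point is the per-band degree computation: on a band interval $\varphi$ need not be monotone and can have complicated level sets, and $f$ is only assumed continuous, so one should not read off the degree from preimage counts at an arbitrary point. Replacing $\varphi|_{[a,b]}$ by its affine interpolation — that is, invoking homotopy invariance of degree — removes this difficulty, once one checks that the replacement is realized through latitude-preserving self-maps of $\overline B$ keeping $\partial\overline B$ on the poles, which can be done fiber by fiber over the interval. The remaining routine verifications are the legitimacy of the localization in the first paragraph, namely that $K_B$ is compact and disjoint from $\partial\overline B$ for $\overline U$ small, which follows from continuity of $f$ and $\varphi$ and from $\varphi$ carrying the endpoints of each band interval into $\{0,1\}$.
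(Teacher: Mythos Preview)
Your argument is correct and reaches the same formula, but the packaging differs from the paper's. The paper works globally: it homotops $f$ on all of $S$ until the latitude map $\varphi$ becomes a sawtooth and the circle maps become the standard $z\mapsto z^{d}$ (or $\overline z^{\,d}$, or a constant when $d=0$), and then simply counts the signed preimages of a regular value near the equator. You instead localize first---using excision/additivity of the local degree over a non-polar point to split $d(f)$ as a sum over bands---then collapse each $\overline B$ to a sphere and compute the degree of the resulting self-map via the suspension/product formula, homotoping only the height map $\varphi|_{[a,b]}$ to its affine interpolation. What this buys you is that you never have to manufacture a global homotopy of $f$ (in particular you avoid the step of ``collapsing the constant intervals to points'' and simultaneously straightening all the circle maps); the trade-off is that you invoke more of the formal machinery of local degree and quotient spaces, whereas the paper's version is essentially a picture. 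One small point you leave implicit: only finitely many bands meet $f^{-1}(\overline U)$, so your sum is finite, and for any band with $K_B=\emptyset$ one has $\Delta_I\varphi\cdot d(B)=0$ (since $\Delta_I\varphi\neq 0$ and $d(B)\neq 0$ force $f|_B$ to hit $p$), so the finitely-supported sum agrees with the sum over all bands as stated in the proposition.
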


\begin{proof}[\bf Proof]
By continuity of $\varphi $ there are at most finitely many band  intervals $I = (a, b)$ with $\varphi (a) \not=  \varphi (b)$, so the sum makes sense.    The degree of $f$ is independent of homotopy, so we can deform  $f$ on each band $B$  in order  that $\varphi $ becomes linear on the band interval  $I$.   Then we can homotop $f$ further to make the intervals on which $\varphi $ is constant become points.  Finally we can homotop $f$ on each latitude so that up to homothety, it sends $z$ to $z^{d}$ or $\overline{z}^{d}$.   (In the case $d=0$ we homotop $f$ so that on each latitude, up to homothety it is the constant  map   $z \mapsto i$.)  The net effect is that by homotopy, we can assume    the latitude map of $f$   is a sawtooth as shown in Figure~\ref{f.sawtooth}, and the map on each latitude is the simplest possible.   

\begin{figure}[htbp]
\centering
\includegraphics[scale=.65]{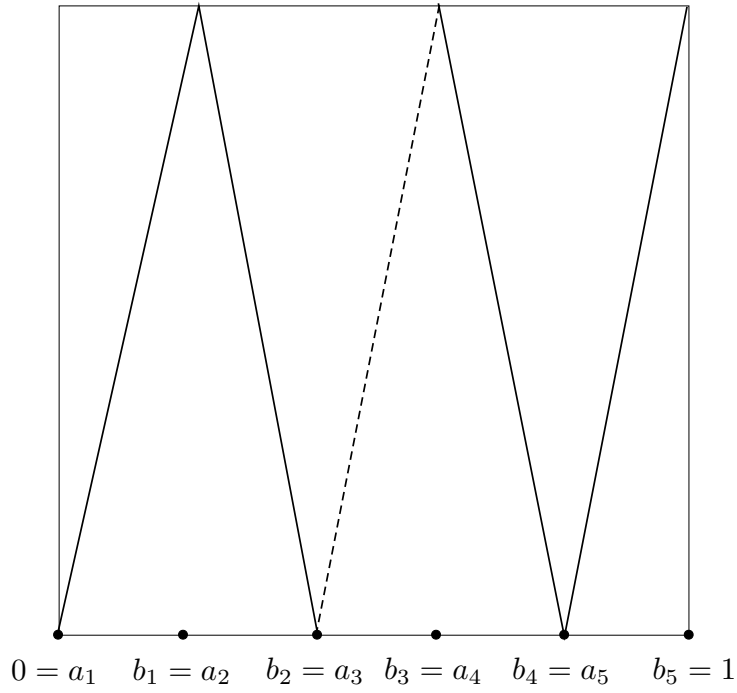}
\caption{The graph of a latitude map   with five bands.  Each pole is fixed  by $f$.  We imagine $f$ sending the first, second, fourth, and fifth band to $S$ with latitude degree $d_{1}, d_{2}, d_{4}, d_{5}$, and sending the third band to $S$ with latitude degree zero.}
\label{f.sawtooth}
\end{figure}

Take a regular value $v \in S$ of $f$    near $  (1/2, 0, 1/2)$.   The global degree of $f$ is the number of pre-images of $v$, counted with multiplicity.     There are no pre-images in bands with latitude degree zero, because those bands are sent to the half-longitude through $(0, 1/2, 1/2)$.   In a band $B$ with $\Delta _{I}\varphi  = 1$ and latitude degree $d > 0$ there are $d$ pre-images.  The same is true if $\Delta _{I}\varphi  = -1$ and $B$ has latitude degree $-d < 0$.  The other bands give pre-images with corresponding negative multiplicity, so the total number of pre-images, counted with multiplicity, is the sum $\sum \Delta _{I}\varphi  \cdot d(B)$ as claimed.
\end{proof}

If $\Delta _{I} \varphi  = 1$ we say that the corresponding band is \textbf{directed upward} or \textbf{ascends}, while if $\Delta _{I}\varphi  = -1$ it is \textbf{directed downward} or \textbf{descends}.  If $\Delta _{I}\varphi  = 0$, the band is \textbf{neutral}.

\begin{Lemm}
\label{l.directedbands}
There exist directed bands.
\end{Lemm}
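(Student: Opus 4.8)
The plan is to read off the existence of directed bands directly from the global degree formula of Proposition~\ref{p.globaldegree}, arguing by contradiction. Suppose no band ascends or descends, so that every band is neutral and $\Delta_I\varphi = 0$ for every band interval $I$. Then every summand in
$$
d(f) = \sum \Delta_I\varphi \cdot d(B)
$$
vanishes, forcing $d(f) = 0$. But by our standing hypothesis $f$ has global degree $2$, a contradiction. Hence at least one band must be directed.

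Before invoking the formula one should dispatch a degenerate possibility: a priori the collection of bands could be empty, in which case the sum in Proposition~\ref{p.globaldegree} would be vacuous and the argument above would say nothing. An empty band collection means exactly that $\varphi^{-1}((0,1)) \cap (0,1) = \emptyset$, i.e. that $\varphi$ carries $(0,1)$ into $\{0,1\}$. Since $\varphi$ is continuous and $(0,1)$ is connected, $\varphi$ would then be constantly $0$ or constantly $1$ on $(0,1)$, hence on all of $[0,1]$; so $f$ would send $S$ into a single pole, again giving $d(f) = 0 \neq 2$. Thus there is genuinely at least one band, and Proposition~\ref{p.globaldegree} applies as stated.

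I do not expect any real obstacle here: the substantive content is already packaged in Proposition~\ref{p.globaldegree} (the homotopy reduction of the latitude map to a sawtooth and the counting of preimages of a regular value). The only points needing a line of care are the vacuous-sum case noted above and the remark that the neutral bands contribute $0$ to the degree sum irrespective of their latitude degrees, so that all of the degree $2$ is accounted for by the finitely many directed bands — which is precisely what guarantees that such bands exist.
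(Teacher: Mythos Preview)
Your argument is correct, but it is not the paper's. The paper argues directly from surjectivity: since $f$ has nonzero degree it is onto, hence so is $\varphi$, and a minimal subinterval $[a,b]\subset[0,1]$ with $\varphi([a,b])=[0,1]$ and $\{\varphi(a),\varphi(b)\}=\{0,1\}$ furnishes a directed band $(a,b)$. You instead invoke Proposition~\ref{p.globaldegree} and read off the existence of a directed band from the nonvanishing of the degree sum. Both routes are short. The paper's is marginally more self-contained (it needs only that degree~$2$ forces surjectivity, not the full degree formula) and is mildly constructive; yours is cleaner once Proposition~\ref{p.globaldegree} is already on the table, and you are more careful than the paper in disposing of the degenerate possibility that the band collection is empty.
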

\begin{proof}[\bf Proof]
Since $f$ is surjective, so is $\varphi $, and   $\varphi $ carries some minimal interval $[a,b] \subset  [0,1]$ onto $[0,1]$ with $\varphi (a) = 0$ and $\varphi (b) =1$ or vice versa.  The interval $(a,b)$  corresponds to a directed band.
\end{proof}
 
\begin{Lemm}
\label{l.invariantlatitude}
If the band $B$ is directed and $\abs{d(B)} \geq  2$   then $B$ contains  an $f$-invariant latitude.
\end{Lemm}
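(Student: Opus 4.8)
The plan is to convert ``$B$ contains an $f$-invariant latitude'' into ``$g(h) := \varphi(h) - h$ changes sign on the band interval $I = (a,b)$''. Indeed, if $g(h_{0}) = 0$ for some $h_{0} \in (a,b)$ then $\varphi(h_{0}) = h_{0}$; since $(a,b) \subseteq (0,1)$, $L(h_{0})$ is a genuine latitude, it lies in $B = \bigcup_{h \in I} L(h)$, and $f(L(h_{0})) = L(\varphi(h_{0})) = L(h_{0})$, which is what we want. Since $B$ is directed, $\{\varphi(a), \varphi(b)\} = \{0,1\}$ with $\varphi(a) \neq \varphi(b)$, so there are two cases.

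First I would dispose of everything except the ``pole cases''. If $B$ descends, then $\varphi(a) = 1$ and $\varphi(b) = 0$, and since $0 \leq a < b \leq 1$ we get $g(a) = 1 - a > 0 > -b = g(b)$, so $g$ has a zero in $(a,b)$ by the intermediate value theorem. If $B$ ascends, then $\varphi(a) = 0$ and $\varphi(b) = 1$, so $g(a) = -a \leq 0 \leq 1 - b = g(b)$; if moreover $a > 0$ and $b < 1$ these inequalities are strict and $g$ again has an interior zero. So far neither $\abs{d(B)} \geq 2$ nor smoothness has been used.

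The remaining situation is an ascending band with $a = 0$ or $b = 1$; say $a = 0$ (the case $b = 1$ is identical, at the north pole). Then $\varphi(0) = 0$, so $f$ fixes the south pole $p$, and $g(0) = 0$, so the endpoint values of $g$ no longer force a sign change. Here I would use $\abs{d(B)} \geq 2$ and the $C^{1}$ hypothesis to prove $Df_{p} = 0$, i.e.\ that $f$ is super-attracting at $p$. Work in a smooth chart at $p$ in which the nearby latitudes are the circles $C_{\rho} = \{\abs{w} = \rho\}$; latitude-preservation says $f(C_{\rho})$ is a circle $C_{F(\rho)}$ (so $\abs{f(w)} = F(\rho)$ for all $w \in C_{\rho}$), and $f|_{C_{\rho}} \colon C_{\rho} \to C_{F(\rho)}$ has degree $d(B)$ for small $\rho > 0$. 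Rescale to $\hat{f}_{\rho} \colon S^{1} \to S^{1}$, $\hat{f}_{\rho}(\theta) = F(\rho)^{-1} f(\rho e^{i\theta})$, of degree $d(B)$, and use $f(w) = Df_{p}(w) + o(\abs{w})$: if $\rho/F(\rho)$ did not tend to $\infty$, then along a subsequence $\rho_{n} \to 0$ the error term $F(\rho_{n})^{-1}\bigl(f(\rho_{n}e^{i\theta}) - Df_{p}(\rho_{n}e^{i\theta})\bigr)$ would vanish uniformly in $\theta$ and $\hat{f}_{\rho_{n}}$ would converge uniformly to $\theta \mapsto c\,Df_{p}(e^{i\theta})$, where $c$ is a limit point of $\rho/F(\rho)$; since $\abs{\hat{f}_{\rho_{n}}} \equiv 1$ on $S^{1}$, this limit would have to be an isometry of $S^{1}$ (forcing $c \neq 0$ and $Df_{p}$ invertible), hence of degree $\pm 1$ --- contradicting $\abs{d(B)} \geq 2$ by continuity of the degree under uniform limits. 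Hence $F(\rho) = o(\rho)$; translating through the height coordinate (which, in a smooth chart, vanishes to second order at $p$) gives $\varphi(h) = o(h)$ as $h \to 0^{+}$, so $g(h) < 0$ for small $h > 0$. Together with $g(b) = 1 - b > 0$ when $b < 1$, or with the symmetric fact $g(h) > 0$ for $h$ near $1$ when $b = 1$, the intermediate value theorem produces an interior zero of $g$, hence the invariant latitude.

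The main obstacle is the super-attracting claim $Df_{p} = 0$ at a fixed pole carrying latitude degree $\abs{d(B)} \geq 2$; this is the single step where $f$ must be more than continuous. It genuinely fails without smoothness: for the Lipschitz map $z \mapsto 2z^{2}/\abs{z}$ the only band is ascending with latitude degree $2$, yet it contains no invariant latitude, precisely because $\varphi(h) - h < 0$ throughout. Secondary care is needed in passing from $f(w) = o(\abs{w})$ in the chart to $\varphi(h) = o(h)$ in the height parameter, and in making the ``uniform limits preserve degree'' step precise for the rescaled circle maps.
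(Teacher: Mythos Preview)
Your proof is correct and follows the same strategy as the paper: reduce to finding an interior fixed point of $\varphi$ via the intermediate value theorem, observe that the only obstruction occurs at a fixed pole on $\partial B$, and use the $C^{1}$ hypothesis together with $\abs{d(B)} \geq 2$ to force super-attraction there so that $\varphi(h) - h$ changes sign. The paper organizes the casework by the nature of $\partial B$ (five cases) and handles the super-attracting step more directly---observing that $T_{p}f$ must infinitesimally preserve the latitude foliation, hence equal $cR$ for a rotation or reflection $R$, and that $c \neq 0$ would force latitude degree $\pm 1$---but the content is the same as your rescaling argument.
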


\begin{proof}[\bf Proof]

\underline{Case 1.}  $\partial B$ is the pair of poles, each being fixed by $f$.       Since $f$ has latitude degree $2$ or better  and is $C^{1}$ at the poles, its derivative there is zero.  For let $p$ be a pole.  The derivative of $f$ at $p$ is a linear map of the tangent space $T_{p}S$ to itself.  Infinitesimally it preserves the latitude foliation, so it is a scalar multiple of a rotation or reflection, $T_{p}f = cR$.  But if $c \not=  0$ then $f$ has latitude degree $\pm 1$ on $B$, contrary to the hypothesis that $\abs{d(B)} \geq  2$.  Thus, $c=0$ and the poles are sinks for $f$, so the latitude map $\varphi  : [0, 1] \rightarrow [0,1]$ has 
\begin{equation*}
\begin{split}
\varphi (0) & = 0 \quad \varphi ^{\prime}(0) = 0
\\
\varphi (1) &= 1 \quad \varphi ^{\prime}(1) = 0.
\end{split}
\end{equation*}
This gives a fixed point $h$ of $\varphi $ with $0 < h < 1$, and    $L(h)$ is invariant under $f$.  See Figure~\ref{f.phifixespoles}.

\underline{Case 2.}  $\partial B$ is the pair of poles, and $f$ interchanges them.    Differentiability of $f$ is irrelevant.   
See Figure~\ref{f.phifixespoles}.

\begin{figure}[htbp]
\centering
\includegraphics[scale=1.2]{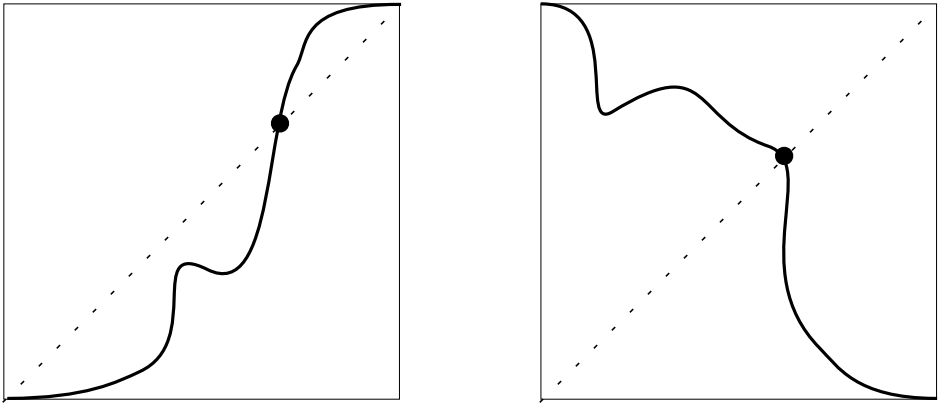}
\caption{The graph of $\varphi $. The first   corresponds to Case 1 and the second to Case~2.}
\label{f.phifixespoles}
\end{figure}

\underline{Case 3.}  $\partial B$ is a pole, fixed by $f$,  and a latitude.  The pole is a sink and the latitude is sent the other pole.  This gives a fixed point of $\varphi $ as in Case~1.

\underline{Case 4.} $\partial B$ is a pole and a latitude, and $f$ sends the pole to the opposite pole.  This gives a fixed point as in Case~2.

\underline{Case 5.}   $\partial B$ is two latitudes, say $L_{1}, L_{2}$ with heights $0 < h_{1} < h_{2} < 1$.    Then $\varphi  : [h_{1}, h_{2}] \rightarrow  [0,1]$ sends $[h_{1}, h_{2}]$ over itself, so it has a fixed point $h \in (h_{1}, h_{2})$.  The latitude $L(h)$ is invariant by $f$.   
\end{proof}

\begin{Rmk}
The   hypothesis that $f$ is $C^{1}$ is used only in Cases 1 and 3.  
\end{Rmk}

\begin{Propn}
\label{p.2tothen}
If the band $B$ is directed and $\abs{d(B)} \geq  2$       then $f^{n}$ has at least $2^{n}$ fixed points.
\end{Propn}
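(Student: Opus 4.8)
The plan is to pull the problem back to a circle endomorphism living on a single invariant latitude, count its periodic points, and then hunt down one more fixed point of $f^n$ off that latitude. First I would invoke Lemma~\ref{l.invariantlatitude}: $B$ contains an $f$-invariant latitude $L(h_0)$, and $g := f|_{L(h_0)}$ is a circle endomorphism of degree $d := d(B)$ with $|d|\geq 2$. Since $f^n|_{L(h_0)} = g^n$ has degree $d^n\neq 1$, lifting $g^n$ to the real line and applying the intermediate value theorem to $G(x)-x$ (whose net change over a period is $d^n-1$) gives at least $|d^n-1|$ fixed points of $g^n$, and $|d|\geq 2$ forces $|d^n-1|\geq 2^n-1$. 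Thus $f^n$ already has at least $2^n-1$ geometrically distinct fixed points, all on $L(h_0)$, and the whole game is to produce one more. I expect this last step to be the real obstacle, since the model $z\mapsto z^2$ on $L(h_0)$ genuinely has only $2^n-1$ fixed points of $f^n$: the invariant latitude alone does not always suffice.

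Next I would clear away most configurations using the poles. Since $\varphi(\{0,1\})\subseteq\{0,1\}$ by Lemma~\ref{l.poletopole}: if $\varphi$ fixes $0$ or $1$, that pole is fixed by $f$, hence by $f^n$, and lies off $L(h_0)$, so it is the missing fixed point; otherwise $\varphi$ swaps the poles, $\varphi(0)=1$ and $\varphi(1)=0$, whence $\varphi^2$ fixes both poles and, when $n$ is even, so does $f^n$ --- done again. This leaves the case where $\varphi$ swaps the poles and $n$ is odd. There the count on $L(h_0)$ by itself already yields at least $2^n$ when $d<0$ (then $|d^n-1| = |d|^n+1$) or $|d|\geq 3$ (then $|d^n-1|\geq 3^n-1\geq 2^n$), so I may assume $d=2$.

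For the one remaining case --- $\varphi$ swaps the poles, $n$ odd, $d=2$ --- I would go back to the global degree. By Proposition~\ref{p.globaldegree}, $2 = d(f) = \sum_I\Delta_I\varphi\cdot d(B(I))$, while telescoping the values of $\varphi$ at successive band endpoints gives $\sum_I\Delta_I\varphi = \varphi(1)-\varphi(0) = -1$. If every directed band other than $B$ had latitude degree $1$, these two identities would combine to $2 = 2\,\Delta_{I_B}\varphi + (-1-\Delta_{I_B}\varphi) = \Delta_{I_B}\varphi-1$, forcing $\Delta_{I_B}\varphi=3$, which is impossible. So some directed band $B'\neq B$ has $d(B')\neq 1$; and by the intermediate-value argument of Lemma~\ref{l.invariantlatitude} it contains an $f$-invariant latitude $L'$ (the endpoint values of $\varphi$ on the band interval of $B'$ are $0$ and $1$ in some order, so $\varphi(h)-h$ is $\le 0$ at one end and $\ge 0$ at the other, and since $\varphi$ swaps the poles neither inequality is an equality, so $\varphi-\mathrm{id}$ changes sign strictly). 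Because $d(B')\neq 1$ and $n$ is odd, $f^n|_{L'}$ has degree $d(B')^n\neq 1$ and hence at least one fixed point; since $L'$ is disjoint from $L(h_0)$, this is the extra fixed point, and $f^n$ has at least $2^n$ of them. It is this case --- where neither $L(h_0)$ alone nor a fixed pole is available and one must extract a second directed band, and with it a second invariant latitude, from the degree formula --- that I expect to be the crux of the proof.
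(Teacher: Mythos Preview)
Your argument is correct, and it is more careful than the paper's own proof. The paper simply invokes Lemma~\ref{l.invariantlatitude} to get an invariant latitude on which $f$ has degree $k$ with $\abs{k}\ge 2$ and then asserts that the $n^{\textrm{th}}$ iterate of such a circle map has at least $\abs{k}^{n}$ fixed points. That last assertion is off by one when $k>0$: a degree-$k$ circle map has at least $\abs{k^{n}-1}$ fixed points of its $n^{\textrm{th}}$ iterate, and the model $z\mapsto z^{2}$ shows this bound is sharp, yielding only $2^{n}-1$ on the latitude. You spotted this and did the honest extra work of locating a $2^{n\textrm{th}}$ fixed point off the latitude.

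Your route to that extra point --- first harvesting a fixed pole whenever one exists or $n$ is even, then in the residual case ($\varphi$ swaps the poles, $n$ odd, $d(B)=2$) combining the degree formula of Proposition~\ref{p.globaldegree} with the telescoping identity $\sum_{I}\Delta_{I}\varphi=\varphi(1)-\varphi(0)=-1$ to force a second directed band $B'$ with $d(B')\neq 1$, and finally extracting an invariant latitude in $B'$ carrying a fixed point of $f^{n}$ --- is sound. The intermediate-value step for $B'$ works exactly as in Cases~2, 4, 5 of Lemma~\ref{l.invariantlatitude} (no $C^{1}$ hypothesis needed there, since the pole-swapping rules out Cases~1 and~3), and $d(B')^{n}\neq 1$ for odd $n$ gives the required fixed point on $L'$. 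So your proof not only matches the paper's conclusion but actually plugs a small gap in its argument; the cost is that the crux you identified is genuinely more intricate than what the paper wrote down.
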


\begin{proof}[\bf Proof]
By Lemma~\ref{l.invariantlatitude} there is an invariant latitude on which $f$ has degree $k$ with  $\abs{k} \geq 2$.  The  $n^{\textrm{th}}$ iterate of such a map of the circle has at least $\abs{k}^{n}$ fixed points.
\end{proof}

\section{Counting Fixed Points}
\label{s.counting}

\begin{Lemm}
\label{l.degreeproduct}
If $f$ and $g$ preserve the latitude foliation then the latitude degree of $f \circ g$ is at most the product of their latitude degrees.
\end{Lemm}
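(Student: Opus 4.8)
The plan is to reduce the lemma to the elementary fact that the Brouwer degree of a composition of circle maps is the product of the degrees. First I would record that $f\circ g$ is again a continuous latitude preserving endomorphism and that its latitude map is the composite $\varphi_{f\circ g}=\varphi_f\circ\varphi_g$, since $f(g(L(h)))$ has height $\varphi_f(\varphi_g(h))$, with the convention that the heights $0$ and $1$ denote the poles (which go to poles by Lemma~\ref{l.poletopole}). Fix a height $h$ with $0<h<1$ at which the latitude degree $d_{f\circ g}(h)$ is defined. By definition this requires $0<\varphi_f(\varphi_g(h))<1$; in particular $g(L(h))$ is a genuine latitude rather than a pole, so $0<\varphi_g(h)<1$ and $d_g(h)$ is defined, and moreover $\varphi_g(h)$ lies in a band of $f$, so $d_f(\varphi_g(h))$ is defined as well.

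Next I would invoke functoriality of degree. The restriction of $f\circ g$ to $L(h)$ factors as the composite of maps of oriented circles
$$
L(h)\ \xrightarrow{\ g\ }\ L(\varphi_g(h))\ \xrightarrow{\ f\ }\ L\bigl(\varphi_f(\varphi_g(h))\bigr),
$$
so on $H_1\cong\mathbb{Z}$ the induced homomorphism is multiplication by $d_f(\varphi_g(h))\cdot d_g(h)$. Hence $d_{f\circ g}(h)=d_f(\varphi_g(h))\cdot d_g(h)$, and therefore
$$
\bigl|d_{f\circ g}(h)\bigr|=\bigl|d_f(\varphi_g(h))\bigr|\cdot\bigl|d_g(h)\bigr|\ \le\ \Bigl(\max_B\bigl|d_f(B)\bigr|\Bigr)\Bigl(\max_B\bigl|d_g(B)\bigr|\Bigr).
$$
Taking the maximum over all admissible $h$ (equivalently, over all bands of $f\circ g$) yields that the latitude degree of $f\circ g$ is at most the product of the latitude degrees of $f$ and $g$.

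The one place I would be most careful is the bookkeeping at the poles: I must rule out the possibility that $g$ crushes $L(h)$ to a pole while $f\circ g$ nevertheless carries $L(h)$ onto a latitude with nonzero degree. This cannot happen, because if $\varphi_g(h)\in\{0,1\}$ then $g(L(h))$ is a pole, $f$ sends that pole to a pole by Lemma~\ref{l.poletopole}, and so $\varphi_f(\varphi_g(h))\in\{0,1\}$, meaning $d_{f\circ g}(h)$ is not defined at such an $h$ in the first place. Once this degenerate case is set aside, the composition-of-circle-maps computation applies verbatim; in fact equality holds pointwise in $h$, and the inequality is recorded only because that is all that will be needed when the lemma is later applied to the iterates $f^{n}$.
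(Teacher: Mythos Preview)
Your argument is correct and is exactly the approach the paper has in mind: the paper's entire proof reads ``This is a standard fact about maps of the circle,'' and what you have written is a careful unpacking of that sentence via multiplicativity of circle degree together with the pole bookkeeping. There is nothing to add.
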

\begin{proof}[\bf Proof]
This is a standard fact about   maps of the circle.
\end{proof}

\begin{Lemm}
\label{l.numberoffixedpoints}   Let $f : S \rightarrow  S$ be a continuous latitude preserving surjection, not necessarily of degree $2$.  
If each of its directed bands $B$ has $\abs{d(B)} \leq  1$ then $f$ has more than $d(f)$   fixed points.
\end{Lemm}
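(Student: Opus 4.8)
\medskip
\noindent\emph{Proof plan.}
The idea is to count the fixed points of $f$ one band at a time and compare with the band-by-band formula $d(f)=\sum\Delta_I\varphi\cdot d(B)$ of Proposition~\ref{p.globaldegree}. List the bands (from South to North) as $B_1,B_2,\dots$; by continuity of $\varphi$ there are only finitely many \emph{directed} bands, although there may be infinitely many neutral ones. For $B_j$ let $\delta_j=d(B_j)$, and let $w_{j-1},w_j\in\{0,1\}$ be the values of $\varphi$ at the left and right ends of its band interval $(a_j,b_j)$, so that $w_0=\varphi(0)$, the last value is $\varphi(1)$, and $B_j$ contributes $c_j:=(w_j-w_{j-1})\delta_j$ to $d(f)$; thus $c_j=0$ for neutral bands while $c_j=\pm\delta_j\in\{-1,0,1\}$ for directed bands by the hypothesis. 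Every fixed point of $f$ lies on a pole --- which happens exactly when $\varphi(0)=0$ or $\varphi(1)=1$ --- or else on an $f$-invariant latitude $L(h)$ with $h$ in some open band interval. So, writing $P\in\{0,1,2\}$ for the number of fixed poles and $\nu_j$ for the number of fixed points of $f$ lying on invariant latitudes inside $B_j$, we have $\abs{\mathrm{Fix}(f)}=P+\sum_j\nu_j$. I will also use the standard fact that a degree $\delta$ self-map of a circle has at least $\abs{\delta-1}$ fixed points, so every invariant latitude inside $B_j$ carries at least $\abs{\delta_j-1}$ fixed points of $f$.

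The first point is that every \emph{descending} band contains an invariant latitude: on $[a_j,b_j]$ the function $\varphi(h)-h$ equals $1-a_j>0$ at the left end and $-b_j<0$ at the right end, so it vanishes somewhere inside. Hence $\nu_j\ge\abs{\delta_j-1}$, and since $\delta_j\le1$ this gives $\nu_j-c_j=\nu_j+\delta_j\ge\abs{\delta_j-1}+\delta_j=1$. For an \emph{ascending} band I use only $\nu_j\ge0$ and $\delta_j\le1$ to get $\nu_j-c_j=\nu_j-\delta_j\ge-1$, and for a \emph{neutral} band $c_j=0$ gives $\nu_j-c_j\ge0$.

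The second point is a rigidity forced by continuity of $\varphi$ together with Lemma~\ref{l.poletopole}: the directed bands, read from South to North, alternate in type. Indeed, once $\varphi$ attains the value $1$ at the top of an ascending band, every subsequent neutral band and every intervening ``pole piece'' (a maximal interval on which $\varphi\equiv0$ or $\varphi\equiv1$) keeps the endpoint value of $\varphi$ equal to $1$ until the next directed band, which therefore starts at $\varphi=1$ and must descend; symmetrically after a descending band. The same propagation of endpoint values shows that $\varphi(0)=0$ precisely when the first directed band ascends (so that the South pole is fixed) and $\varphi(1)=1$ precisely when the last directed band ascends (so that the North pole is fixed). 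By Lemma~\ref{l.directedbands} there is at least one directed band; let there be $A$ ascending and $D$ descending ones. Alternation gives $\abs{A-D}\le1$, and running through the four possibilities for the types of the first and last directed bands --- using that $P$ gains a fixed pole whenever the first or the last directed band ascends --- one checks in each case that $P+D\ge A+1$.

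Combining these (the conclusion being trivial if $\sum_j\nu_j=\infty$),
\[
\abs{\mathrm{Fix}(f)}=P+\sum_j\nu_j=d(f)+P+\sum_j(\nu_j-c_j)\ \ge\ d(f)+P+D-A\ \ge\ d(f)+1,
\]
so $f$ has more than $d(f)$ fixed points. The step I expect to be the main obstacle is the rigidity of the third paragraph: one has to make the ``value propagation'' airtight in the presence of bands that abut the poles and of possibly infinitely many neutral bands, and then verify the four-case inequality $P+D\ge A+1$. Everything else is bookkeeping on top of Proposition~\ref{p.globaldegree}, the elementary circle-map estimate, and the intermediate value theorem.
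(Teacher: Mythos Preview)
Your argument is correct and considerably cleaner than the paper's. Both proofs rest on the same ingredients --- Proposition~\ref{p.globaldegree}, the intermediate value theorem to locate invariant latitudes, and the elementary circle-map estimate $\#\mathrm{Fix}\geq\abs{\delta-1}$ --- but the organization differs. The paper classifies the directed bands into five types $(a),(b),(c),(d),(e)$, writes $P-d(f)\geq p+N-2a+2b-r$ with a correction term $r$ for legs the diagonal may miss, and then checks twelve subcases (three pole configurations, each with several possibilities for the type of the first and last leg). You instead prove a uniform local estimate $\nu_j-c_j\geq 1,\,-1,\,0$ according as $B_j$ descends, ascends, or is neutral, and reduce the global bookkeeping to the single identity $P+D-A=1$, which follows from the alternation of directed bands together with the pole correspondence (first directed band ascends $\Leftrightarrow$ South pole fixed; last ascends $\Leftrightarrow$ North pole fixed). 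This replaces the paper's twelve subcases by four trivial parity checks.

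What your approach buys is brevity and a conceptual explanation of why the inequality holds with the exact constant $+1$: the excess $P+D-A$ is always exactly $1$, and all further slack comes from the band-local estimates. What the paper's case analysis buys is sharper lower bounds on $P-d(f)$ in the individual subcases (e.g.\ $\geq 5$ when both extreme legs are of type~(b)), though those refinements are never used. Your caveat about ``value propagation'' in the presence of pole pieces and infinitely many neutral bands is justified; the clean way to handle it is to note that if two consecutive directed bands had mismatched endpoint $w$-values, the intermediate value theorem would produce a minimal subinterval on which $\varphi$ runs from $1$ to $0$ (or vice versa) while staying in $(0,1)$ in the interior, and that subinterval would itself be a directed band interval, contradicting consecutiveness.
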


\begin{proof}[\bf Proof]  We count the directed bands as follows. 
\begin{itemize}

\item[(a)]
  $a$ is the number of ascending bands with $d(B) = 1$.
\item[(b)]
    $b$ is the number of ascending bands   with $d(B) = -1$.
\item[(c)]
 $c$ is the number of descending  bands with $d(B) = 1$.

\item[(d)]
 $d$ is the number of descending bands with $d(B) = -1$.
 
 \item[(e)]  
 $e$ is the number of directed bands with $d(B) = 0$.

\end{itemize}

We think of the graph of $\varphi $ as composed of ``legs'' that join $[0,1] \times 0$ to $[0,1] \times  1$.  Formally, they are arcs in the open square $(0,1) \times  (0,1)$, and they   correspond to the bands $B_{1}, \dots  , B_{N}$.  See Figures~\ref{f.polesswitch}, \ref{f.polestoSouthpole}, and \ref{f.polesfixed}. Ascending legs correspond to ascending bands, descending to descending.  We call a leg corresponding to a band with $d(B) = -1$ a \textbf{reverse-leg}, and we call a leg corresponding to a band with $d(B) = 0$ a \textbf{zero-leg}.  There are $b+d$ reverse-legs and $e$ zero-legs.  Each intersection of the diagonal $\Delta $ with a reverse leg produces two fixed points of $f$, since such an intersection gives an $f$-invariant latitude $L$, and $f : L \rightarrow  L$ reverses   orientation.  Each intersection of $\Delta $ with a zero-leg produces at least one fixed point of $f$, since such an intersection gives an $f$-invariant latitude $L$, and $f : L \rightarrow  L$ has degree zero.  Intersections of $\Delta $ with other legs need not produce fixed points.

The rest of the proof is a counting argument in which there are   three cases to consider, concerning  how $f$ affects the poles, and twelve subcases concerning   which legs     $\Delta $ crosses.

Let $p$ be the number of fixed poles, and $P = N_{1}(f)$ the number of fixed points of $f$ (including the fixed poles). 
According to Proposition~\ref{p.globaldegree} the  degree of $f$ is $d(f) = (a+d)-(b+c)$.  We are trying to show that $P > d(f)$.  Naively, we imagine $\Delta $ crossing all the legs, and hence generating corresponding fixed points  --  two for each (b)-crossing, two for each (d)-crossing, and one for each (e)-crossing.  Thus we would    hope  
$$
P \;\; \geq \;\;  p + e + 2(b+d) \;\; >\;\; (a+d) - (b+c), 
$$
which leads us to ask whether $p + N >  2a -2b$ since $ a+b+c+d + e = N$.  This inequality is in fact true, but we need a stronger one  because if $\Delta $ fails to cross some legs of type (b), (d), or (e) then we would have over-estimated the fixed points.  We write
\begin{equation*}
\begin{split}
P - d(f)  \;\; &\geq   \;\; p + e + 2(b+ d) - (a+d) + (b+c) -r
\\
&= p + N -2a + 2b -r
\end{split}
\end{equation*}
where $r$ is the correction term due to $\Delta $ missing legs of type (b), (d), (e).  Thus,
$$
r  \, = \,
\begin{cases}
   0   &  \textrm{ if } \Delta \textrm{ crosses all legs,}
  \\
   1  &  \textrm{ if }  \Delta \textrm{ crosses all legs except one of type (e),}
  \\
   2  &  \textrm{ if }  \Delta \textrm{ crosses all legs except two of type (e),}
     \\
   2  &  \textrm{ if }  \Delta \textrm{ crosses all legs except one of type (b) or (d),}
     \\
   3  &  \textrm{ if }  \Delta \textrm{ crosses all legs except one of type (b) or (d)} 
   \\
   &\textrm{ and one of type (e),}
     \\
   4  &  \textrm{ if }  \Delta \textrm{ crosses all legs except two of type (b) or (d).}
\end{cases}
$$

\underline{Case 1.} $\varphi $ switches the poles.  Then $p=0$,  $\varphi (0) = 1$, $\varphi (1) = 0$, $N$ is odd, and there are $(N-1)/2$ ascending legs.  In particular $a$ is at most the number of ascending legs, so   $2a \leq  N-1$.  The diagonal meets all the legs so we have $r=0$.  This gives   
\begin{equation*}
\begin{split}
P - d(f)  \;\; &\geq   \;\; p + N -2a + 2b - r
\\
&\geq  \;\;0 + N -(N-1) + 2b - 0  \;\; \geq \;\; 1
\end{split}
\end{equation*}
 since  $2b \geq  0$.  Thus $P> d(f)$ in Case 1. See Figure~\ref{f.polesswitch}.
  \begin{figure}[htbp]
\centering
\includegraphics[scale=.55]{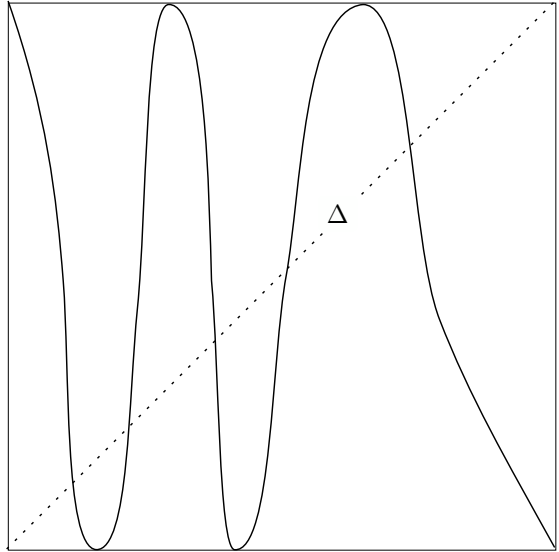}
\caption{The graph of a latitude map $\varphi $ when $f$ interchanges the poles. The diagonal $\Delta $ meets each leg.  The graph is simplified  --  each leg can actually intersect $\Delta $ several times;  in the case shown, the graph  must intersect $\Delta $ five times.  Also unshown are  possible arcs in the graph that join the top or bottom of the square to itself.  They correspond to neutral bands}
\label{f.polesswitch}
\end{figure}
  
  \underline{Case  2.}  $f$ sends both poles to the same pole, say the Southpole.  Then $p=1$, $\varphi (0) = 0 = \varphi (1)$, $N$ is even,   and there are $N/2$ ascending legs.  In particular, $2a \leq  N$.  The diagonal crosses all the legs except possibly the first.  See Figure~\ref{f.polestoSouthpole}
  \begin{figure}[htbp]
\centering
\includegraphics[scale=.55]{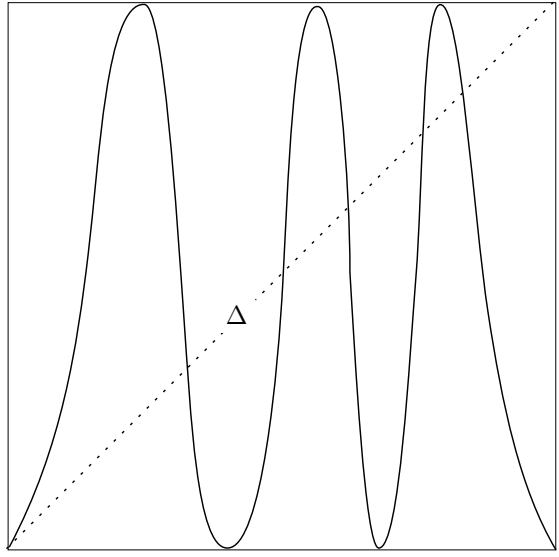}
\caption{The graph of a latitude map when $f$ sends both poles to the Southpole.  It is also simplified.  The diagonal meets each leg except perhaps the first.}
\label{f.polestoSouthpole}
\end{figure}
  
  \underline{Case 2a.} The first leg is of type (a).  Then $\Delta $ crosses all the legs of type (b), (d), (e), which implies $r=0$.  This gives    
    \begin{equation*}
\begin{split}
P - d(f)  \;\; &\geq   \;\; p + N -2a + 2b - r
\\
&\geq  \;\;1 + N -N + 2b -0  \;\; \geq \;\; 1
\end{split}
\end{equation*}
 since  $2b \geq  0$.

  \underline{Case 2e.}  The first leg is of type (e). Then $r=1$.  Also,    $2a \leq N-2$ since there are $N/2$ ascending legs, one of which (the first one) is of type (e), not of type (a).  This gives 
        \begin{equation*}
\begin{split}
P - d(f)  \;\; &\geq   \;\; p + N -2a + 2b - r
\\
&\geq  \;\;1 + N -(N-2) + 2b -1 \;\; \geq \;\; 2
\end{split}
\end{equation*}
 since $2b \geq 0$.  
  
    \underline{Case 2b.}   The first leg is of type (b).  Then $r=2$, and as in the previous case, $2a \leq N-2$. This gives
        \begin{equation*}
\begin{split}
P - d(f)  \;\; &\geq   \;\; p + N -2a + 2b - r
\\
&\geq  \;\;1 + N -(N-2) + 2b -2  \;\; \geq \;\; 3
\end{split}
\end{equation*}
since $2b \geq 2$.  Thus $P > d(f)$ in Case 2.

 \underline{Case 3.}  $f$ fixes both poles.  Then $p=2$, $\varphi (0) = 0$, $\varphi (1) = 1$, $N$ is odd, and there are $(N+1)/2$ ascending legs.  In particular, $2a \leq  N+1$.  The diagonal crosses all legs except possibly the first and last.  See Figure~\ref{f.polesfixed}.
 
 \begin{figure}[htbp]
\centering
\includegraphics[scale=.55]{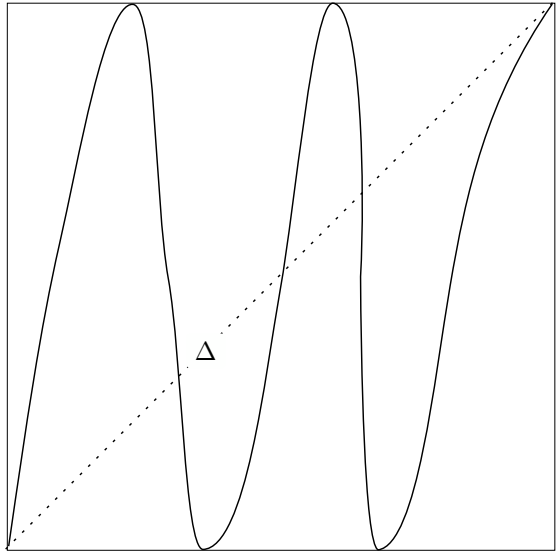}
\caption{The graph of a latitude map when $f$ leaves the poles fixed.  It is also simplified.  The diagonal meets each leg except perhaps the first and last.}
\label{f.polesfixed}
\end{figure}
 
 \underline{Case 3aa.}  The first and last legs are of type (a).  Then $\Delta $ crosses all the legs of type (b), (d), (e), so $r=0$.  This gives     
         \begin{equation*}
\begin{split}
P - d(f)  \;\; &\geq   \;\; p + N -2a + 2b - r
\\
&\geq  \;\; 2 + N - (N+1) + 2b - 0 \;\; \geq \;\; 1
\end{split}
\end{equation*}
 since  $2b \geq 0$. 
  
 \underline{Case 3ae.}  The first leg is of type (a) and the last  of type (e).  Then $r=1$.  Also, since the first and last legs ascend, and since one of them (the last one) is of type (e), not of type (a), we have   $2a \leq N-1$.  This gives 
         \begin{equation*}
\begin{split}
P - d(f)  \;\; &\geq   \;\; p + N -2a + 2b - r
\\
&\geq  \;\;2 + N -(N-1) + 2b -1  \;\; \geq \;\; 2
\end{split}
\end{equation*}
since  $2b \geq 0$. 
  
\underline{Case 3ab.}  The first leg is of type (a) and the last of type (b). Then $r=2$ and, as in the previous case, $2a \leq  N-1$.  This gives
        \begin{equation*}
\begin{split}
P - d(f)  \;\; &\geq   \;\; p + N -2a + 2b - r
\\
&\geq  \;\;2 + N -(N-1) + 2b -2 \;\; \geq \;\; 3
\end{split}
\end{equation*}
since  $2b \geq 2$.  
  
\underline{Case 3ea.}  The first leg is of type (e) and the last of type (a). This   is symmetric to  Case~3.ae. 
  
\underline{Case 3ee.}  The first  and last legs are of type (e). If $N \geq  3$ then  $r=2$ and $2a \leq N-3$ since two ascending legs are not of type (a).  This gives 
        \begin{equation*}
\begin{split}
P - d(f)  \;\; &\geq   \;\; p + N -2a + 2b - r
\\
&\geq  \;\;2 + N -(N-3) + 2b -2  \;\; \geq  \;\; 3
\end{split}
\end{equation*}
since $2b \geq 0$.  On the other hand, if $N = 1$ then there is just one ascending leg, and it is of type (e).  Then $r=1$ and $a=0=b$.  This gives
       \begin{equation*}
\begin{split}
P - d(f)  \;\; &\geq   \;\; p + N -2a + 2b - r
\\
&=  \;\;2 + 1 -0 + 0 -1  \;\; =  \;\; 2.
\end{split}
\end{equation*}

\underline{Case 3eb.}  The first leg is of type (e) and the last of type (b).  Then $N \geq  3$,  $r=3$, and $2a \leq  N-3$.  This gives
        \begin{equation*}
\begin{split}
P - d(f)  \;\; &\geq   \;\; p + N -2a + 2b - r
\\
&\geq  \;\;2 + N -(N-3) + 2b -3 \;\; \geq  \;\; 4
\end{split}
\end{equation*}
since $2b \geq 2$. 
  
\underline{Case 3ba.}  The first leg is of type (b) and the last of type (a).  This is symmetric to Case~3.ab.
  
\underline{Case 3be.}  The first leg is of type (b) and the last of type (e).  This is symmetric to Case~3.eb.
  
\underline{Case 3bb.}  The first  and last legs are of type (b).   If $N \geq  3$ then  $r=4$ and $2a \leq N-3$. This gives
        \begin{equation*}
\begin{split}
P - d(f)  \;\; &\geq   \;\; p + N -2a + 2b - r
\\
&\geq  \;\;2 + N -(N-3) + 2b -4 \;\; \geq  \;\; 5
\end{split}
\end{equation*}
since $2b \geq 4$.   On the other hand, if $N=1$ then there is just one ascending leg and it is of type (b).  Then $r=2$, $a=0$, and $b =1$.  This gives
       \begin{equation*}
\begin{split}
P - d(f)  \;\; &\geq   \;\; p + N -2a + 2b - r
\\
&=  \;\;2 + 1 -0 + 2 -2 \;\; =  \;\; 3.
\end{split}
\end{equation*}
Thus $P > d(f)$ in Case 3, which completes the proof of Lemma~\ref{l.numberoffixedpoints}.
\end{proof}

\begin{Rmk}
The degree of $f$ can be negative, in which case Lemma~\ref{l.numberoffixedpoints} says nothing.  One can re-work the preceding estimates to show that
$$
P - \abs{d(f)} \geq -1,
$$
but we do not need this.
\end{Rmk}

\begin{proof}[\bf Proof of   Theorem~\ref{t.fixedpoints}]  We assume that the $C^{1}$ latitude preserving map $f : S \rightarrow  S$ has degree $2$ and claim that $f^{n}$  has at least $2^{n}$ fixed points.  
By Lemma~\ref{l.directedbands} there exist directed bands.  
If there is a directed band $B$ with $\abs{d(B)} \geq 2$ then the result follows from Proposition~\ref{p.2tothen}.  If all the directed bands have $\abs{d(B)} \leq  1$ then by Lemma~\ref{l.degreeproduct} the same is true of $f^{n}$.  Applying  Lemma~\ref{l.numberoffixedpoints} to $f^{n}$, we conclude that $f^{n}$ has more than $d(f^{n}) = 2^{n}$ fixed points.
\end{proof}

\begin{Rmk}
The proof of Corollary~\ref{c.GRF} does not require the full force of Lemma~\ref{l.numberoffixedpoints}.  It suffices that $P = N_{1}(f)  \geq d(f) - k$  where $k$ is an absolute constant.  For, as just observed, if all the bands for $f$ have $\abs{d(B)} \leq  1$ then the same is true for $f^{n}$.  Thus, $N_{1}(f^{n}) \geq d(f^{n}) - k$.  Since $N_{1}(f^{n}) = N_{n}(f)$, we get 
$$
N_{n} \geq 2^{n} - k
$$
from which Corollary~\ref{c.GRF} is immediate.  

Here is the caseless proof of this weaker inequality.  Since all the bands of $f$ have $\abs{d(B)} \leq  1$, the  number of fixed points of $f$ is at least
$ 
e-2 + 2(b+d-2)
$.
For the graph of the latitude map  has at least $e-2$ legs of type (e)  that cross the diagonal,  and at least $b+d-2$ legs of type (b) or (d) that cross the diagonal.  The former produce one fixed point each, and the latter two fixed points each.  
For it is only the first and last legs of the latitude map that can fail to intersect the diagonal.  This quantity minus the degree of $f$ is
$ N -2a + 2b - 7$ where $N$ is the number of legs.  That is,
$$
N_{1}(f) - d(f) \geq N-2a + 2b - 7 \geq -7
$$
since $2a \leq  N + 1$ and $2b \geq  0$.  
\end{Rmk}

\section{Three Examples}
\label{s.examples}
 
It is possible to code a latitude preserving map $f$  as follows.  Take any finite sequence of integers $(n_{0}; d_{1}, \dots , d_{N})$ such that $n_{0}$ is $0$ or $1$, and make the following interpretation.  If $n_{0}= 0$ and $N$ is even then consider directed bands $B_{1}, \dots , B_{N}$ such that
\begin{equation*}
\begin{split}
B_{1} & \textrm{ ascends and has latitude degree $d_{1}$}
\\
B_{2} & \textrm{ descends and has latitude degree $d_{2}$}
\\
B_{3} & \textrm{ ascends and has latitude degree $d_{3}$}
\\
\dots  & \dots 
\\
B_{N} & \textrm{ descends and has latitude degree $d_{N}$.}
\end{split}
\end{equation*}
On the other hand, if $N$ is odd then the last band ascends and has latitude degree $d_{N}$.  Similarly, if $n_{0} = 1$ then every ascending band becomes descending and vice versa.  The latitude degrees remain the same.  The choice $n_{0}= 0$ indicates that $f$ fixes the Southpole, while $n_{0} = 1$ indicates that $f$ sends the Southpole to the Northpole.

Since ascending and descending bands alternate, such a code is well defined for any latitude preserving map $f : S \rightarrow  S$, and it describes $f$ well up to non-monotonicity of the latitude maps $\varphi  : I \rightarrow  (0,1)$,   the presence of neutral bands, and latitude rotation.

The global degree of $f$ is the alternating sum $d_{1} - d_{2} + \dots  + (-1)^{N+1}d_{N}$.

Assume that $\abs{d_{i}} \leq  1$ for $1 \leq  i \leq  1$.
It is easy to see that to each code $(n_{0}; d_{1}, \dots , d_{N})$ there corresponds a    smooth  endomorphism $f$ with the following properties. 
\begin{itemize}

\item
The map $f$ preserves the latitude foliation, and up to homothety it is $z \mapsto z$ or $z \mapsto  \overline{z}$ on each latitude.  
\item
$S$ has   $N$ bands $B_{i}$ with latitude degrees $d_{i}$.
\item
If $d_{i} \not=  0$ then $f$ sends $B_{i}$ diffeomorphically to the sphere minus the poles.
\item
If $d_{i} =  0$ then $f$ sends $B_{i}$   to the prime meridian $M$.  ($M$ is the longitude arc  that joins the poles and contains the point  $(1/2, 0, 1/2)$.)

\end{itemize}            
We refer to such an $f$ as a good representative of the code.  It is not unique.   Here are three examples.
\begin{enumerate}

\item
The code is $(0;1,1)$.  Let $f$ be a   good representative of the code. Then $S$ has two bands, the Southern and Northern hemispheres, minus the poles and equator. 
\begin{itemize}

\item
$f$ wraps the Southern hemisphere    upward over $S$, pinching the equator to the Northpole, and preserving    the latitude orientation.  It    fixes the Southpole.
\item
$f$ wraps the Northern hemisphere    downward over $S$, pinching the equator to the Northpole, and preserving  the   latitude orientation.  It   sends the Northpole to the Southpole.

\end{itemize} 

  The map $f$ has degree zero, preserves the latitude foliation,  and   fixes the Southpole.  Its latitude map $\varphi $ is unimodal, so its   entropy is $\ln 2$, and since $f$ fibers over $\varphi $ with diffeomorphisms in the circle fibers, it too has entropy $\ln 2$.    Now take a latitude-preserving rotation $R$ of the sphere by an angle $\theta $ where $\theta /2\pi $ is irrational.  Set $F = f \circ  R$.    The entropy is unaffected and   $F$ preserves the latitude foliation.   The only fixed point of $F^{n}$ is the Southpole, for the effect of $F^{n}$ on any invariant latitude    is an irrational rotation.  Thus, the Growth Rate Inequality holds for $F$, in the form $0 \geq   0$.

\item
The code is $(0;1,-1)$.  Let $g$ be a good representative of the code. Again, $S$ has the hemisphere bands.

\begin{itemize}
\item
$g$ wraps the Southern hemisphere    upward over $S$, pinching the equator to the Northpole, and preserving    the latitude orientation. It     fixes the Southpole.

\item
$g$ wraps the Northern hemisphere    downward over $S$, pinching the equator to the Northpole, and reversing    the latitude orientation. It   sends the Northpole to the Southpole.

\end{itemize}

The map $g$ has degree $2$ and fixes the Southpole.  Again, let $R$ be an irrational rotation of the sphere and set $G = g \circ  R$.  The map $G$ preserves the latitude foliation.  Its latitude map $\varphi $ is unimodal, so its   entropy is $\ln 2$, and since $f$ fibers over $\varphi $ with diffeomorphisms in the circle fibers, it too has entropy $\ln 2$.  The map      $\varphi ^{n}$ has $2^{n}$ fixed points. Each corresponds to a $G^{n}$-invariant latitude $L$.    On half of them, $G^{n}$ preserves the latitude orientation, and on half of them $G^{n}$  reverses it.  On the latitudes with preserved orientation, $G^{n}$ is an irrational rotation and has no periodic points.  On the latitudes with reversed orientation $G^{n}$ has two fixed points.  Altogether,  $G^{n}$ has $2^{n}$ fixed points, so the Growth Rate Inequality holds for $G$, in the form $\ln 2  \geq   \ln 2$.

\item
The code is $(0;1,0,1)$.  Let $h$ be a good representative of the code.    Then $S$ has three bands $B_{1}, B_{2}, B_{3}$.  
\begin{itemize}

\item
 $h$ wraps $B_{1}$  upward over $S$, pinching its  boundary latitude to the Northpole, and preserving   the latitude orientation.  It  fixes the Southpole.
\item
$h$ wraps $B_{2}$  downward along the prime meridian $M$, pinching its lower boundary latitude to the Northpole and its upper boundary latitude to the Southpole.    The $h$-image of $B_{2}$ equals $M$.
\item
$h$ wraps $B_{3}$  upward over $S$, pinching its boundary latitude to the Southpole, and preserving  the latitude orientation. It  fixes the Northpole.

\end{itemize}  

The map $h$ has degree $2$ and fixes both poles.  Its latitude map $\varphi $ is bimodal, so its   entropy is $\ln 3$, and since $h$ fibers over $\varphi $ with diffeomorphisms in the circle fibers, it too has entropy $\ln 3$.    Take again an irrational rotation of the sphere, $R$,  and set $H = h \circ R$.  The map $\varphi ^{n}$ has $3^{n}$ fixed points.  For the majority of them, their $\varphi $-orbits contain    points in $I_{2}$. (That is, the proportion of the $\varphi ^{n}$-fixed-points whose orbit includes points of $I_{2}$ tends to $1$ as $ n\rightarrow \infty $.  The other orbits lie in a zero measure Cantor set.)    For each such fixed point of $\varphi ^{n}$ we have an $H^{n}$-invariant latitude $L$, at least one of whose $H$-iterates lies in $B_{2}$, and therefore $H^{n}(L) $ is the single point $L \cap M$.  The other fixed points of $\varphi ^{n}$ correspond to $H^{n}$-invariant latitudes whose $H$-orbits avoid $B_{2}$. On these latitudes, $H^{n}$ is an irrational rotation, so they give no fixed points.   Altogether, $H^{n}$ has $3^{n}$ fixed points, nearly each of which contributes one fixed point of $H^{n}$, so the Growth Rate Inequality holds in the form $\ln 3 \geq  \ln 2$.  
\end{enumerate}

\begin{Rmk}
It is possible that Lemma~\ref{l.numberoffixedpoints} and our theorem have   proofs using a coding like this.  We would need to know how the coding is affected when we take a banding $B_{1},\dots  , B_{N}$ of $S$  and consider a sub-banding of each $B_{i}$ as
$$
B_{ij} = B_{i} \cap f^{-1}(B_{j}).
$$
\end{Rmk}

\end{document}